\documentclass[a4paper,12pt]{article}

\usepackage{amscd}
\usepackage{amsfonts}
\usepackage{amsmath}
\usepackage{amssymb}
\usepackage{amsthm}
\usepackage[T1]{fontenc} 
\usepackage{here} 
\usepackage{mathrsfs} 
\usepackage{txfonts} 
\usepackage[all]{xy} 
\usepackage{algorithm} 
\usepackage{algpseudocode} 
\usepackage{diagbox} 
\usepackage{ulem} 

\allowdisplaybreaks

\theoremstyle{plain}
\newtheorem{thm}{Theorem}[section]
\newtheorem{lmm}[thm]{Lemma}
\newtheorem{prp}[thm]{Proposition}
\newtheorem{crl}[thm]{Corollary}
\newtheorem{conj}[thm]{Conjecture}
\theoremstyle{definition}
\newtheorem{dfn}[thm]{Definition}

\newcommand{\vs}[1][0.2]{\vspace{#1in}\noindent\ignorespaces}
\newcommand{\ba}{\begin{array*}}
\newcommand{\ea}{\end{array*}}
\newcommand{\be}{\begin{eqnarray*}}
\newcommand{\ee}{\end{eqnarray*}}
\newcommand{\bi}{\begin{itemize}}
\newcommand{\ei}{\end{itemize}}
\newcommand{\bb}{\vs\begin{itembox}}
\newcommand{\eb}{\end{itembox}}
\newcommand{\bc}{\begin{center}}
\newcommand{\ec}{\end{center}}
\newcommand{\bs}{\vs\begin{screen}}
\newcommand{\es}{\end{screen}}

\def\ens#1{{\mathchoice{\left\{ #1 \right\}}{\{ #1 \}}{\{ #1 \}}{\{ #1 \}}}}
\def\set#1#2{{\mathchoice{\left\{ #1 \ \middle| \ #2 \right\}}{\{ #1 \mid #2 \}}{\{ #1 \mid #2 \}}{\{ #1 \mid #2 \}}}}
\def\r#1{\text{\rm #1}}
\def\t#1{\text{#1}}
\def\Bigv#1{\left| #1 \right|}
\def\v#1{{\mathchoice{\Bigv{#1}}{| #1 |}{| #1 |}{| #1 |}}}

\def\ol#1{\overline{#1}{}}
\def\tl#1{\tilde{#1}{}}

\newcommand{\bC}{\mathbb{C}}

\newcommand{\bN}{\mathbb{N}}

\newcommand{\bP}{\mathbb{P}}
\newcommand{\bQ}{\mathbb{Q}}

\newcommand{\bZ}{\mathbb{Z}}

\newcommand{\cA}{\mathscr{A}}

\newcommand{\cU}{\mathscr{U}}

\newcommand{\C}{\bC}

\newcommand{\N}{\bN}
\newcommand{\Q}{\bQ}

\newcommand{\Z}{\bZ}

\newcommand{\Fp}{\mathbb{F}_p}

\algnewcommand\algorithmicbreak{{\bf break}}
\algnewcommand\Break{\algorithmicbreak{}}
\algnewcommand\algorithmiccontinue{{\bf continue}}
\algnewcommand\Continue{\algorithmiccontinue{}}

\newcommand{\NS}{\r{NS}}
\newcommand{\TS}{\r{TS}}

\title{Transcendence of Tonelli--Shanks Power Modulo Infinitely Large Primes}
\author{Tomoki Mihara}
\date{}

\begin{document}

\maketitle
\begin{abstract}
For a prime number $p$, we denote by $k_p$ the greatest odd divisor of $p-1$. Tonelli--Shanks algorithm is a classical algorithm to compute a square root of a quadratic residue $n \in \mathbb{Z}$ modulo $p$ as the product of $n^{\frac{k_p+1}{2}}$, which we call {\it Tonelli--Shanks power of $n$}, and a correction term given as a power of a quadratic non-residue modulo $p$. We prove the transcendence over $\mathbb{Q}$ of the images of the following in the ring $\mathscr{A}$ of integers modulo infinitely large primes: the greatest odd divisor $k_p$ of $p-1$, the composite $f(\frac{p - 1}{k_p})$ of any injective map $f \colon \mathbb{Z} \to \mathbb{Z}$ and the greatest $2$-power $\frac{p - 1}{k_p}$ dividing $p-1$, Tonelli--Shanks power $n^{\frac{_p+1}{2}}$ of any $n \in \mathbb{Z} \setminus \{-1,0,1\}$, and the correction term of Tonelli--Shanks algorithm for such $n$.
\end{abstract}

\tableofcontents

\section{Introduction}
\label{Introduction}

We denote by $\bP$ the set of prime numbers. Let $p \in \bP$. We denote by $k_p \in \N$ the greatest odd divisor of $p - 1$. For $k \in 2 \N + 1$, we set $\bP_k \subset \set{p \in \bP \setminus \ens{2}}{k_p = k}$. It is difficult to determine $\# \bP_k$ for $k \in \N$, and there are several number theoretic conjectures related to its infiniteness.

\begin{conj}[Fermat Prime Conjecture]
There exist infinitely many Fermat primes, i.e.\ the equation $\# \bP_1 = \infty$ holds.
\end{conj}

\begin{conj}[Thabit Prime Conjecture]
There exist infinitely many Thabit primes of second kind, i.e.\ the equation $\# \bP_3 = \infty$ holds.
\end{conj}

There is no known $k \in 2 \N + 1$ with $\# \bP_k = \infty$. Indeed, the existence of such a $k$ directly implies Proth Prime Conjecture.

\begin{conj}[Proth Prime Conjecture]
There exist infinitely many Proth primes, i.e.\ $p \in \bP$ with $k_p < \sqrt{p - 1}$.
\end{conj}

Compared to the infiniteness problem of $\# \bP_k$ for $k \in \N$, the emptiness problem has ever been successfully studied.

\begin{thm}[\cite{Sie60} Th\'eor\`eme]
There exist infinitely many Sierpi\'nski numbers, i.e.\ $k \in 2 \N + 1$ with $\# \bP_k = 0$.
\end{thm}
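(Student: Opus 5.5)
The plan is Sierpi\'nski's covering-congruence argument. I want an odd $k$ such that $k 2^n + 1$ is composite for every $n \geq 1$. This is equivalent to $\# \bP_k = 0$, since $p \in \bP_k$ means exactly $p = k 2^n + 1$ with $n \geq 1$.

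The key step is to fix a finite covering system for $n$, built from primes whose multiplicative order of $2$ is known. I would use the primes dividing $2^{64} - 1$ together with $3$:
\[
3, 5, 17, 257, 641, 65537, 6700417,
\]
where $2$ has order $2, 4, 8, 16, 64, 32, 64$ respectively. I would then choose residue classes covering $\Z$:
\begin{itemize}
\item $n \equiv 1 \pmod 2$,
\item $n \equiv 2 \pmod 4$,
\item $n \equiv 4 \pmod 8$,
\item $n \equiv 8 \pmod{16}$,
\item $n \equiv 16 \pmod{32}$,
\item $n \equiv 32 \pmod{64}$ and $n \equiv 0 \pmod{64}$.
\end{itemize}
These exhaust $\Z$. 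Each class is assigned one prime $q_i$ with $\mathrm{ord}_{q_i}(2)$ dividing its modulus. The class $n \equiv 0 \pmod{64}$ and the class $n \equiv 32 \pmod{64}$ are handled by $641$ and $6700417$.

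For each pair of class $a_i \bmod m_i$ and prime $q_i$, I impose $k \equiv -2^{-a_i} \pmod{q_i}$. Then $k 2^n + 1 \equiv 0 \pmod{q_i}$ whenever $n \equiv a_i \pmod{m_i}$. Since the $q_i$ are distinct, the Chinese remainder theorem gives a residue class of $k$ modulo $Q = \prod q_i$. Adding the condition $k \equiv 1 \pmod 2$ gives a class modulo $2Q$, and that class contains infinitely many positive odd $k$.

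For such $k$ and any $n \geq 1$, some $q_i$ divides $k 2^n + 1$. To make $k 2^n + 1$ composite, I also take $k > \max_i q_i$, so that $k 2^n + 1 > q_i$. This can be arranged because the class contains arbitrarily large $k$, which also yields infinitely many Sierpi\'nski numbers.

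The main obstacle is verifying the covering data. This means checking the multiplicative orders of $2$ modulo the listed primes, including the factorization $2^{32} + 1 = 641 \cdot 6700417$ and the fact that $2$ has order exactly $64$ modulo both factors. It also means confirming that the classes really cover every $n$ and are assigned to distinct primes. The remaining steps are routine.
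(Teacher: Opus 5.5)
Your proposal is correct. The seven classes $1 \bmod 2$, $2 \bmod 4$, $4 \bmod 8$, $8 \bmod 16$, $16 \bmod 32$, $32 \bmod 64$, $0 \bmod 64$ do cover $\Z$, and they are paired with $3,5,17,257,65537$ and then $641, 6700417$ in either order. Together with $k$ odd and $k > 6700417$, this forces $k2^n+1$ to be composite for every $n \geq 1$. The paper gives no proof of its own; it only cites Sierpi\'nski's 1960 paper, whose argument is exactly this one: your congruences reduce to $k \equiv 1 \pmod{641(2^{32}-1)}$ and $k \equiv -1 \pmod{6700417}$ when $641$ covers $n \equiv 32 \pmod{64}$.
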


\begin{conj}[Sierpi\'nski conjecture]
The least Sierpi\'nski number is $78557$.
\end{conj}

\begin{thm}[\cite{EO79} Theorem 1]
There exist infinitely many odd numbers which are not Sierpi\'nski numbers, i.e.\ $k \in 2 \N + 1$ with $\# \bP_k > 0$.
\end{thm}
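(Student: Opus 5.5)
The plan is to exhibit infinitely many distinct odd numbers $k$ for which $\bP_k$ contains an explicit prime. Primes $p \equiv 3 \pmod 4$ suit this best. For such $p$ we have $p - 1 = 2 \cdot \frac{p-1}{2}$ with $\frac{p-1}{2}$ odd. Hence $k_p = \frac{p-1}{2}$, and $p \in \bP_{k_p}$, so $\# \bP_{k_p} > 0$. The map $p \mapsto \frac{p-1}{2}$ is injective. It therefore suffices to show that there are infinitely many primes $p \equiv 3 \pmod 4$.

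That last fact needs only Euclid's classical argument, with no appeal to Dirichlet's theorem. Suppose $q_1, \ldots, q_r$ were all the primes congruent to $3$ modulo $4$, and put $N = 4 q_1 \cdots q_r - 1$. Then $N$ is odd and $N \equiv 3 \pmod 4$. Its prime factors are all odd, so they are each congruent to $1$ or $3$ modulo $4$. A product of primes that are all $\equiv 1 \pmod 4$ is itself $\equiv 1 \pmod 4$, so some prime factor $q$ of $N$ satisfies $q \equiv 3 \pmod 4$. But $q$ cannot be any $q_i$, because $q_i \mid N + 1$ and a prime cannot divide both $N$ and $N+1$. This contradicts the assumption that the list was complete.

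Combining the two steps, the odd numbers $k_p = \frac{p-1}{2}$ for $p \in \bP$ with $p \equiv 3 \pmod 4$ are pairwise distinct and infinitely many. Each satisfies $\# \bP_{k_p} \geq 1$, so none of them is a Sierpi\'nski number, which proves the statement.

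There is no real obstacle here; the only point needing care is the verification $k_p = \frac{p-1}{2}$, i.e.\ that $\frac{p-1}{2}$ is odd exactly when $p \equiv 3 \pmod 4$. The original argument of \cite{EO79} gives a density statement, but the weaker statement above does not require it. If one wanted many such $k$ in other residue patterns, one could instead argue by counting: if only finitely many $k$ occurred, all primes up to $x$ would lie among $O(\log x)$ numbers of the form $k 2^m + 1$, contradicting the growth of $\pi(x)$. The mod-$4$ argument is simpler and suffices.
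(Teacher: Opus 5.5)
Your proof is correct. It takes a different route from the paper, which gives no argument of its own and simply cites \cite{EO79} Theorem 1. That cited result is quantitative: the odd $k$ for which $k 2^n + 1$ is prime for some $n$ have positive lower density.

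You prove exactly the weaker statement as written, and you do it elementarily. For $p \equiv 3 \pmod 4$ you get $v_2(p-1) = 1$, hence $k_p = \frac{p-1}{2}$ and $p \in \bP_{k_p}$. Injectivity of $p \mapsto \frac{p-1}{2}$, together with Euclid's argument for primes $\equiv 3 \pmod 4$, then finishes the proof.

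Your route is self-contained and shows that the ``i.e.'' formulation is nearly tautological. Every $k_p$ with $p$ odd is automatically a non-Sierpi\'nski number, so one only needs $k_p$ to take infinitely many values. The citation carries the density information instead. Your construction cannot supply that, because the $k$ it produces correspond to primes $p \le 2x+1$ and so form a density-zero subset of the odd numbers.

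Your counting aside is also fine, but it needs more than the infinitude of primes: it needs a bound such as $\pi(x)/\log x \to \infty$ (e.g.\ via Chebyshev, or divergence of $\sum 1/p$). The mod-$4$ argument sidesteps this.
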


The aim of this paper is to study the distribution of $k_p$ from a new point of view using the ring
\be
\cA \coloneqq \prod_{p \in \bP} \Fp \bigg/ \bigoplus_{p \in \bP} \Fp
\ee
of integers modulo infinitely large primes. In number theory, $\cA$ plays a role of the base ring in the study of finite multiple zeta values, which are finite analogues of multiple zeta values introduced by D.\ Zagier. Starting from \cite{KZ}, there have been various studies on specific elements of $\cA$. We give new transcendence results on $k_p$:

\begin{thm}
\label{intro transcendence of v_2}
For any injective map $f \colon \N \to \Z$, the image of $(f(\frac{p - 1}{k_p}))_{p \in \bP}$ in $\cA$ is transcendental over $\Q$.
\end{thm}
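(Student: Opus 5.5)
The idea is to argue by contradiction: an algebraic relation would force infinitely many primes to divide one fixed nonzero integer. Dirichlet's theorem on primes in arithmetic progressions supplies these primes.

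For $p \in \bP \setminus \ens{2}$, write $v(p) \in \N$ for the exponent with $\frac{p-1}{k_p} = 2^{v(p)}$. Let $a \in \cA$ be the image of $(f(2^{v(p)}))_{p \in \bP}$; the value at $p = 2$ is irrelevant in $\cA$. Suppose $a$ is algebraic over $\Q$. Note that $\cA$ is a $\Q$-algebra, because every nonzero integer is invertible in $\Fp$ for all but finitely many $p$. So there is a nonzero $P \in \Q[x]$ with $P(a) = 0$, and after multiplying by a common denominator we may assume $P \in \Z[x] \setminus \ens{0}$. Unwinding the definition of $\cA$ as a quotient by $\bigoplus_{p} \Fp$, the relation $P(a) = 0$ says that there is a finite set $S \subset \bP$ with
\be
P\bigl(f(2^{v(p)})\bigr) \equiv 0 \pmod p \quad \text{for all } p \in \bP \setminus S.
\ee

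Next, choose a good exponent $m$. The nonzero polynomial $P$ has only finitely many integer roots. Since $f$ is injective, the map $m \mapsto f(2^m)$ on $\N$ is injective, so only finitely many $m$ have $f(2^m)$ a root of $P$. Fix $m \geq 1$ with $N \coloneqq P(f(2^m)) \in \Z \setminus \ens{0}$.

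Now apply Dirichlet's theorem to the progression $1 + 2^m + 2^{m+1}\Z$, whose terms are coprime to $2^{m+1}$ since they are odd. It contains infinitely many primes $p$. Each such $p$ satisfies $p - 1 \equiv 2^m \pmod{2^{m+1}}$, i.e.\ $v(p) = m$. For every such $p \notin S$, the displayed congruence gives $p \mid N$. This produces infinitely many prime divisors of the fixed nonzero integer $N$, a contradiction. Hence $a$ is transcendental over $\Q$.

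There is no serious obstacle here. The one step that needs care is the existence of infinitely many primes with prescribed $2$-adic valuation of $p-1$, which is exactly the role of Dirichlet's theorem. The rest is the standard translation of an identity in $\cA$ into congruences modulo all but finitely many primes, which requires clearing denominators of $P$ first.
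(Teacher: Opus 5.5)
Your proof is correct, and it takes a genuinely more direct route than the paper. The paper proves the restated form (Theorem~\ref{transcendence of v_2}, with $f(v_2(x-1))$; this gives the statement after replacing $f$ by $v \mapsto f(2^v)$). It combines Dirichlet's theorem with Corollary~\ref{infinitely many values}. That corollary rests on Corollary~\ref{infinitely many limits} and Proposition~\ref{minimal polynomial vanishing}, i.e.\ on non-standard limits defined through ring homomorphisms $\cA \to \C$ and non-principal ultrafilters on $\bP$, imported from the author's earlier work. In that scheme each value $f(2^m)$ becomes a rational non-standard limit, and infinitely many algebraic non-standard limits force transcendence via a Bezout identity $sP + tg = 1$.

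You bypass the round trip through ultrafilters. You need only one exponent $m \geq 1$ with $P(f(2^m)) \neq 0$, plus the fact that a nonzero integer has finitely many prime divisors. This is exactly what the Bezout step reduces to when $g = x - q$ with $q \in \Z$. Your argument is shorter, self-contained, and cites nothing beyond Dirichlet.

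What the paper's framework buys is a uniform criterion that also handles irrational algebraic limits. It needs this later for Tonelli--Shanks powers, where the relevant limits are primitive $2^v$-th roots of unity (Lemma~\ref{root of unity}) and no evaluation at an integer is available.

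Your restriction to $m \geq 1$ is also the right one. The paper's proof asserts that $v_2(p-1) = v$ holds for infinitely many $p$ for every $v \in \N$, which fails at $v = 0$, where only $p = 2$ qualifies. The slip is harmless there, since infinitely many $v$ suffice.
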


\begin{thm}
\label{intro transcendence of u_2}
The image of $(k_p)_{p \in \bP}$ in $\cA$ is transcendental over $\Q$.
\end{thm}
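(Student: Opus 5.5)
The plan is to argue by contradiction. Suppose the image of $(k_p)_{p \in \bP}$ in $\cA$ is algebraic over $\Q$. Then there is a nonzero polynomial $P(X) = \sum_{i=0}^d a_i X^i \in \Z[X]$ of degree $d$ with $a_d \neq 0$ such that $P(k_p) \equiv 0 \pmod p$ for all but finitely many $p \in \bP$. (Clearing denominators lets us take integer coefficients.)

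The key observation is that $k_p$ is determined modulo $p$ by the $2$-adic valuation of $p-1$. For odd $p$, write $p - 1 = 2^{e_p} k_p$ with $e_p = \frac{p-1}{k_p}$ viewed through its exponent. Then $2^{e_p} k_p \equiv -1 \pmod p$, so $k_p \equiv -2^{-e_p} \pmod p$.

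To remove the inverse, introduce the reversed polynomial
\be
Q(X) \coloneqq X^d P(-X^{-1}) = \sum_{i=0}^d (-1)^i a_i X^{d-i} \in \Z[X].
\ee
Its constant term is $(-1)^d a_d \neq 0$, so $Q$ is nonzero. Multiplying the congruence $P(-2^{-e_p}) \equiv 0 \pmod p$ by $2^{d e_p}$, which is a unit modulo $p$, gives
\be
Q(2^{e_p}) \equiv 0 \pmod p
\ee
for all but finitely many $p \in \bP$.

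Now fix a single exponent $e \in \N$ with $e \geq 1$ and $Q(2^e) \neq 0$. Such an $e$ exists because $Q$ has only finitely many roots. By Dirichlet's theorem on primes in arithmetic progressions, there are infinitely many primes $p \equiv 1 + 2^e \pmod{2^{e+1}}$, and these are exactly the primes with $e_p = e$. All but finitely many of them satisfy $p > |Q(2^e)|$ and also lie outside the finite exceptional set. For such a $p$ we would have $p \mid Q(2^e)$ with $0 < |Q(2^e)| < p$, which is impossible. This contradiction shows that the image of $(k_p)_{p \in \bP}$ is transcendental.

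I do not expect a serious obstacle. The only points needing care are the following.
\bi
\item Correctly translating $k_p$ into $-2^{-e_p}$ modulo $p$.
\item Checking that the reversed polynomial $Q$ is nonzero, which is immediate from $a_d \neq 0$.
\item Invoking Dirichlet's theorem to produce infinitely many primes with a prescribed value of $e_p$; this is the only nontrivial external input.
\ei
The same strategy should also handle Theorem \ref{intro transcendence of v_2}: fix a suitable value of $\frac{p-1}{k_p}$ and use that infinitely many primes attain it.
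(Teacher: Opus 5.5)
Your proof is correct and takes essentially the paper's route: the paper also uses $k_p \equiv -2^{-v_2(p-1)} \pmod{p}$ to reduce to the naive transcendence of the image of $2^{v_2(x-1)}$ (Theorem~\ref{transcendence of v_2} with $f(v) = 2^v$), which rests on Dirichlet's theorem giving infinitely many primes for each fixed value of $v_2(p-1)$. You simply replace the paper's non-standard-limit machinery (Corollary~\ref{infinitely many values}) with a direct choice of $e \geq 1$ satisfying $Q(2^e) \neq 0$, and you write out the reversed-polynomial step that the paper compresses into ``This implies the assertion.''
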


The amount $k_p$ is used in Tonelli--Shanks algorithm, which is an effective method to compute a square root modulo $p$. When an input $n \in \Z$ is a quadratic residue modulo $p$, Tonelli--Shanks algorithm returns a square root of $n$ modulo $p$ given as the product of $n^{\frac{k_p + 1}{2}}$ and a correction term given as a power of a quadratic non-residue modulo $p$. We call $n^{\frac{k_p + 1}{2}}$ {\it Tonelli--Shanks power of $n$}.

\vs
When the correction term is $1$, then the algebraic relation $(n^{\frac{k_p + 1}{2}})^2 \equiv n \pmod{p}$ holds. In general (even if $n$ is a quadratic non-residue modulo $p$), the algebraic relation $(n^{\frac{k_p + 1}{2}})^{2^{v+1}} \equiv n^{2^v} \pmod{p}$ holds for a sufficiently large $v \in \N$, e.g.\ $v = \log_2 \frac{p - 1}{k_p}$. Then natural questions arise:
\bi
\item[(1)] When $p$ varies, how often can $n^{\frac{k_p + 1}{2}}$ be a square root of $n$ modulo $p$?
\item[(2)] When $p$ varies, is there a law which controls the least $v \in \N$ such that $(n^{\frac{k_p + 1}{2}})^{2^{v+1}} \equiv n^{2^v} \pmod{p}$?
\ei
Such questions can be interpreted into algebraic nature of the image of $(n^{\frac{k_p + 1}{2}})_{p \in \bP}$ in $\cA$. For example, if $(n^{\frac{k_p + 1}{2}})^2 \equiv n \pmod{p}$ holds for all but finitely many $p \in \bP$, then the image of $(n^{\frac{k_p + 1}{2}})_{p \in \bP}$ in $\cA$ is a root of $x^2 - n \in \Q[x]$, and hence is algebraic over $\Q$. However, if $n$ is not a square number, then there exist infinitely may $p \in \bP$ such that $n + p \Z$ is a quadratic non-residue, and hence it is not the case. If there is an upperbound $V$ of the least $v$ independent of $p \in \bP$, then the image of $(n^{\frac{k_p + 1}{2}})_{p \in \bP}$ in $\cA$ is a root of $x^{2^{V+1}} - n^{2^V} \in \Q[x]$, and hence is algebraic over $\Q$. However, it is not a case unless $\v{n} \leq 1$:

\begin{thm}
\label{intro transcendence of Tonelli--Shanks power}
For any $n \in \Z$, the following are equivalent:
\bi
\item[(1)] The image of $(n^{\frac{k_p + 1}{2}})_{p \in \bP}$ in $\cA$ is transcendental over $\Q$.
\item[(2)] The inequality $\v{n} > 1$ holds.
\ei
\end{thm}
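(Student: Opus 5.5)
The implication (1) $\Rightarrow$ (2) I would check directly. If $n \in \ens{0,1}$, then $n^{\frac{k_p+1}{2}} = n$ for every $p$, so the image is rational. If $n = -1$, then $(n^{\frac{k_p+1}{2}})^2 = 1$, so the image is a root of $x^2 - 1$. In all three cases the image is algebraic.

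For (2) $\Rightarrow$ (1), assume $\v{n} > 1$ and write $a_p \coloneqq n^{\frac{k_p+1}{2}}$. Suppose for contradiction that there is a nonzero $P \in \Z[x]$ with $P(a_p) \equiv 0 \pmod{p}$ for all but finitely many $p$. We may take $P$ to be nonzero at $0$, since $a_p$ is a unit mod $p$ for $p \nmid n$. The key identity is
\be
a_p^2 = n \cdot b_p, \qquad b_p \coloneqq n^{k_p} \bmod p.
\ee
Here $b_p$ lies in the $2$-Sylow subgroup of $\Fp^\times$, so its order is $2^{j_p}$ for some $j_p \geq 0$. I would eliminate $x$ by setting
\be
Q(y) \coloneqq \r{Res}_x\bigl(P(x), x^2 - n y\bigr) \in \Z[y].
\ee
This polynomial is nonzero; its roots are the numbers $\alpha^2/n$ for the roots $\alpha$ of $P$, and $Q(b_p) \equiv 0 \pmod p$. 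Only finitely many roots of $Q$ are roots of unity, so choose $J$ such that no root of $Q$ is a primitive $2^j$-th root of unity for any $j \geq J$. For such $j$ the resultant $R_j \coloneqq \r{Res}(Q, \Phi_{2^j})$ is a nonzero integer. If $j_p = j$, then $b_p$ is a common root of $Q$ and $\Phi_{2^j}$ modulo $p$, so $p \mid R_j$. Hence only finitely many primes $p$ can have $j_p = j$.

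It therefore suffices to fix one $j \geq J$ and produce infinitely many primes $p$ with $j_p = j$; this is the step I expect to be the main obstacle. Write $n = \pm m^{2^e}$ with $m > 1$ not a perfect square, which is possible because $\v{n} > 1$. I would take primes $p \equiv 1 + 2^{j+e} \pmod{2^{j+e+1}}$, so that $\frac{p-1}{k_p} = 2^{j+e}$ exactly, and additionally require $m$ to be a quadratic non-residue mod $p$. Then $m^{k_p}$ has order exactly $2^{j+e}$, so $(m^{2^e})^{k_p}$ has order exactly $2^j$. The sign of $n$ is harmless once $j \geq 2$: $(-1)^{k_p} = -1$ has order $2$ and is a power of the element of order $2^j$. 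The condition that $m$ be a non-residue is equivalent to $p$ not splitting completely in $\Q(\sqrt{m})$. Since $\sqrt{m} \notin \Q(\zeta_{2^{j+e+1}})$ when $m$ has an odd prime factor, Chebotarev's density theorem (or Dirichlet's theorem combined with quadratic reciprocity) gives infinitely many such $p$. The case where $m$ is a power of $2$ needs separate care, because $\sqrt{2} \in \Q(\zeta_8)$. In that case the condition "$2$ is a non-residue" is itself a congruence condition mod $8$, which conflicts with $p \equiv 1 \pmod 8$. I would instead control the exact $2$-order of $2$ through the fields $\Q(\zeta_{2^{t}}, 2^{1/2^{s}})$ and Kummer theory. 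This still yields a positive density of primes with $j_p$ equal to any prescribed large $j$, which gives the contradiction.
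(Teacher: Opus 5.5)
The overall architecture of your proof matches the paper's. You pass from $n^{\frac{k_p+1}{2}}$ to $b_p = n^{k_p}$ via $a_p^2 = n b_p$ (the paper's $\TS_n(x)^2 = n \times n^{u_2(x-1)}$). You then show that for every large $j$, infinitely many $p$ make $b_p$ a primitive $2^j$-th root of unity modulo $p$, and conclude by coprimality with $\Phi_{2^j}$. Your resultants $Q$ and $R_j$ are a correct elementary substitute for the paper's ultrafilter-based Corollary~\ref{infinitely many functions}. Your Dirichlet/Chebotarev construction replaces the appeal to Hasse's densities in Lemmas~\ref{Dirichlet density} and~\ref{root of unity}.

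The gap is in that construction. The claim that $\sqrt{m} \notin \Q(\zeta_{2^{j+e+1}})$ whenever $m$ has an odd prime factor is false. The only real quadratic subfield of $\Q(\zeta_{2^t})$ is $\Q(\sqrt{2})$, so the correct condition is that the squarefree part of $m$ is not $2$. For $n = \pm 18$ (or $\pm 50$, $324, \ldots$) you get $m = 18 = 2 \cdot 3^2$. Then $\left(\frac{18}{p}\right) = \left(\frac{2}{p}\right) = 1$ for every $p \equiv 1 \pmod{8}$, so once $j+e \geq 3$ your main branch yields no primes at all. Your fallback only claims the case where $m$ is a power of $2$, so these $n$ are covered by neither branch. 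Moreover, the fallback is only asserted, and that is exactly where the remaining work sits.

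To close the gap, take the exceptional case to be $m = 2k^2$, which contains the odd powers of $2$, and carry it out as follows.
\begin{itemize}
\item Require $p \equiv 1 + 2^{j+e+1} \pmod{2^{j+e+2}}$ and $m$ not a fourth power modulo $p$. Then $m$ is automatically a square, since $\left(\frac{m}{p}\right) = \left(\frac{2}{p}\right) = 1$. Hence $v_2(\mathrm{ord}_p m) = j+e$, and your order count and sign argument go through verbatim.
\item Such $p$ exist. The polynomial $x^4 - m$ is irreducible with dihedral splitting field $\Q(m^{1/4}, i)$, so $m^{1/4}$ lies in no abelian extension of $\Q$. On the other hand, $\sqrt{m} = k\sqrt{2} \in \Q(\zeta_8)$.
\item Hence $L = \Q(\zeta_{2^{j+e+2}}, m^{1/4})$ is quadratic over $\Q(\zeta_{2^{j+e+2}})$. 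Some $\sigma \in \mathrm{Gal}(L/\Q)$ restricts to $1 + 2^{j+e+1}$ and sends $m^{1/4}$ to $-m^{1/4}$.
\item Chebotarev gives infinitely many unramified $p$ whose Frobenius class is that of $\sigma$. For these $p$, since $p \equiv 1 \pmod{4}$, the polynomial $x^4 - m$ has no root modulo $p$.
\end{itemize}

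With this repair your proof is complete and self-contained, which is a real advantage over the paper's route. The recursion $\Delta(B_{n^{2^{v+1}}}) = 2^{-1}(1 + \Delta(B_{n^{2^v}}))$ quoted in the proof of Lemma~\ref{Dirichlet density} would give the opposite of the claimed inequality. It would also contradict the inclusion $B_{n^{2^{v+1}}} \subseteq B_{n^{2^v}}$, so it presumably refers to the complementary odd-order sets.
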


We briefly explain contents of this paper. In \S \ref{Convention}, we introduce convention in this paper. In \S \ref{Non-standard Limit}, we extend the notion of a non-standard limit originally formulated in \cite{Mih26-2}, and apply it to the study of transcendence. In \S \ref{Naive Transcendence of Greatest Odd Divisor of p-1}, we show Theorem \ref{intro transcendence of v_2} and Theorem \ref{intro transcendence of u_2}. In \S \ref{Naive Transcendence of Tonelli--Shanks Power}, we show Theorem \ref{intro transcendence of Tonelli--Shanks power}.

\section{Convention}
\label{Convention}

We denote by $\N$ the set of non-negative integers, and by $\bP$ the set of prime numbers. For $p \in \bP$, we denote by $\Fp$ the finite field $\Z/p \Z$.

\vs
For a set $I$, we denote by $\# I$ its cardinality. For sets $X$ and $Y$, we denote by $X^Y$ the set of maps $Y \to X$. When we handle a sequence $s$ indexed by a set $I$, we frequently use the map notation $s(i)$ instead of the subscript notation $s_i$ to point the entry at $i \in I$, in order to avoid massive use of subscripts. For a set $X$, $x \in X$, and a binary relation $R$ on $X$, we set $X_{R x} \coloneqq \set{x' \in X}{x' R x}$. For example, every $d \in \N$ is identical to $\N_{< d} = \set{i \in \N}{i < d}$, and hence for a set $X$, $X^d$ formally means $X^{\N_{< d}}$, which is naturally identified with the set of $d$-tuples in $X$.

\vs
For $p \in \bP$, we denote by $\Z_{(p)} \subset \Q$ the localisation of $\Z$ at the prime ideal $p \Z$. We define a ternary relation $q_0 \equiv q_1 \pmod{p}$ on $(p,q_0,q_1) \in \bP \times \Q^2$ by
\be
q_0 - q_1 \in p \Z_{(p)} \land q_1 \in \Z_{(p)},
\ee
and denote by $q_0 \not\equiv q_1 \pmod{p}$ its negation. For $(p,q) \in \bP \times \Q$, we set
\be
q \bmod p \coloneqq
\left\{
\begin{array}{ll}
\set{n \in \Z}{q \equiv n \pmod{p}} & (q \in \Z_{(p)}) \\
p \Z & (q \notin \Z_{(p)})
\end{array}
\right.
\in \Fp.
\ee
We note that the value for the latter case does not affect the theory, as we use it only to ignore conventionally exceptional cases. We set
\be
\tl{\cA} & \coloneqq & \set{a \in \Q^{\bP}}{\# \set{p \in \bP}{a(p) \notin \Z_{(p)}} < \infty}.
\ee
For $a \in \tl{\cA}$, we call
\be
(a(p) \bmod p)_{p \in \bP} + \bigoplus_{p \in \bP} \Fp \in \cA
\ee
{\it the image of $a$ in $\cA$}. The reader should be careful that the convention $\tl{\cA}$ is different from that of \cite{Mih26-2}, which considers a sequence indexed by $\N$ rather than $\bP$.
\vs
We say that $a \in \cA$ is {\it naively algebraic} if there exists an $f \in \Q[x] \setminus \ens{0}$ such that $f(a) = 0$, and is {\it naively transcendental} if $a$ is not naively algebraic. Here, we formally define $0^0 \coloneqq 1$.

\section{Non-standard Limit}
\label{Non-standard Limit}

We introduced the predicate that $q \in \Q$ is a non-standard limit of a given $a \in \tl{\cA}$ in \cite{Mih26-2} Definition 2.1. We extend the definition to $q \in \C$ through the equivalence of \cite{Mih26-2} Proposition 2.2 (1) and (2):

\begin{dfn}
Let $a \in \tl{\cA}$ and $q \in \C$. We say that $q$ is a {\t non-standard limit of $a$} if there exists a ring homomorphism $\phi \colon \cA \to \C$ sending the image of $a$ in $\cA$ to $q$. We denote by $\NS(a) \subset \C$ the subset of non-standard limits of $a$ algebraic over $\Q$.
\end{dfn}

We have an analogue of the equivalence of \cite{Mih26-2} Proposition 2.2 (2) and (3).

\begin{prp}
\label{minimal polynomial vanishing}
Let $a \in \tl{\cA}$ and $q \in \C$. If $q$ is an algebraic number, then the following are equivalent:
\bi
\item[(1)] The relation $q \in \NS(a)$ holds.
\item[(2)] There exist infinitely many $p \in \bP$ such that $f(a(p)) \equiv 0 \pmod{p}$, where $f$ denotes the minimal polynomial of $q$ over $\Q$. 
\ei
\end{prp}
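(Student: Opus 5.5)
Both directions will go through maximal ideals of $\cA$ and the ultraproduct description of the residue fields.

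\emph{(1) implies (2).} Suppose $\phi \colon \cA \to \C$ is a ring homomorphism with $\phi(\bar a) = q$, where $\bar a$ denotes the image of $a$ in $\cA$. Write $f = c^{-1} g$ with $g \in \Z[x]$ primitive and $c \in \Z \setminus \ens{0}$. The rational constants act compatibly: $\Z$ maps into $\cA$ and then into $\C$ in the standard way, and each $m \in \Z \setminus \ens{0}$ is a unit in $\cA$, because it is invertible in $\Fp$ for all but finitely many $p$. Hence $\phi$ restricted to $\Q$ is the inclusion, and so $\phi(f(\bar a)) = f(q) = 0$.

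Assume for contradiction that $f(a(p)) \not\equiv 0 \pmod p$ for all but finitely many $p$. Then $g(a(p)) \bmod p$ is nonzero in $\Fp$ for almost all $p$. In particular the image of $(g(a(p)))_p$ in $\cA$ is a unit, since its inverse can be taken coordinatewise, changing only finitely many coordinates. Consequently $f(\bar a) = c^{-1} g(\bar a)$ is a unit in $\cA$, and $\phi$ would send it to a unit of $\C$, not to $0$. This contradiction shows that infinitely many $p$ satisfy the congruence. Along the way I will use that $a(p) \in \Z_{(p)}$ for almost all $p$, so that $f(a(p)) \bmod p$ equals $f$ evaluated at $a(p) \bmod p$ once $p$ exceeds the denominators of $f$.

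\emph{(2) implies (1).} This is the main step. Let $S$ be the infinite set of primes $p$ with $f(a(p)) \equiv 0 \pmod p$, and choose a non-principal ultrafilter $\cU$ on $\bP$ containing $S$. The surjection $\cA \to K \coloneqq \prod_p \Fp / \cU$ has as target a field of characteristic $0$, since each fixed prime $\ell$ is nonzero in $\Fp$ for $p \neq \ell$. It is well known that $K$ is algebraically closed, or at least that its algebraic closure has cardinality $2^{\aleph_0}$. Either way, $K$ embeds into $\C$, because an algebraically closed field of characteristic $0$ and cardinality continuum is isomorphic to $\C$, and $\# K = 2^{\aleph_0}$.

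In $K$, the element $\alpha$ induced by $a$ satisfies $f(\alpha) = 0$, because the zero locus contains $S \in \cU$. So $\alpha$ is a root of the irreducible polynomial $f$, i.e.\ a $\Q$-conjugate of $q$. Take any embedding $\iota \colon \ol{K} \to \C$. Then $\iota(\alpha)$ is some root $q'$ of $f$ in $\C$. There is an automorphism $\sigma$ of $\C$ with $\sigma(q') = q$: extend the $\Q$-isomorphism $\Q(q') \cong \Q(q)$ to $\C$, using that $\C$ is algebraically closed of transcendence degree continuum over both subfields. Then $\phi \coloneqq \sigma \circ \iota \circ (\cA \to K)$ is a ring homomorphism sending $\bar a$ to $q$, so $q \in \NS(a)$.

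I expect the main obstacle to be this cardinality and embedding step: justifying that $\ol K$ embeds into $\C$ and that $\C$ admits the required automorphism. Both rest on Steinitz's theorem and the axiom of choice. I would cite these as standard facts; alternatively, one could appeal directly to \cite{Mih26-2} Proposition 2.2, whose rational case uses the same ultrafilter construction.
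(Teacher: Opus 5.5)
Your proof is correct. For (2)$\Rightarrow$(1) it follows the paper's argument, with the paper's citation unpacked; for (1)$\Rightarrow$(2) you take a different and more elementary route.

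\textbf{(1)$\Rightarrow$(2).} The paper invokes \cite{Mih26-1} Proposition 2.4 (2) and Proposition 3.1. These say that the kernel of every ring homomorphism $\phi \colon \cA \to \C$ equals $\set{a' \in \cA}{\lim_{\cU} a' = 0}$ for some non-principal ultrafilter $\cU$. The paper then reads off $S \in \cU$, hence $\# S = \infty$. You argue directly instead: if $f(a(p)) \not\equiv 0 \pmod{p}$ for almost all $p$, then $f(\bar a)$ is a unit of $\cA$ (invert coordinatewise), so $\phi(f(\bar a)) = f(q)$ cannot vanish. This uses only that $\phi$ is a unital ring homomorphism, and hence restricts to the identity on $\Q$. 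It needs no classification of kernels; in effect it is the key observation behind that classification, applied to the single element you care about.

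\textbf{(2)$\Rightarrow$(1).} Both arguments do the same three things:
\begin{itemize}
\item[(i)] pick a non-principal ultrafilter containing $S$;
\item[(ii)] map $\cA$ to $\C$ through $\prod_{p} \Fp / \cU$;
\item[(iii)] correct $\phi(\bar a)$ to $q$ by an automorphism of $\C$.
\end{itemize}
The paper cites \cite{Mih26-1} Proposition 3.1 for the homomorphism. You build it from Steinitz's theorem and a cardinality count. This makes your proof self-contained, at the price of spelling out the choice-dependent embedding.

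\textbf{One correction.} The ultraproduct $K = \prod_{p} \Fp / \cU$ is \emph{not} algebraically closed; it is pseudo-finite. For example, the first-order sentence ``some element is not a square'' holds in every $\Fp$ with $p$ odd, hence in $K$ by \L{}o\'s's theorem. So drop that clause. Your argument is unaffected, since you actually embed $\ol{K}$. For that it suffices that $\# \ol{K} \leq 2^{\aleph_0}$, which follows immediately from $\# \prod_{p} \Fp = 2^{\aleph_0}$; the exact equality $\# K = 2^{\aleph_0}$ is not needed.
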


\begin{proof}
We denote by $\ol{a} \in \cA$ the image of $a$ in $\cA$. Set
\be
S \coloneqq \set{p \in \bP}{f(a(p)) \equiv 0 \pmod{p}}.
\ee
Assume (1). Take a ring homomorphism $\phi \colon \cA \to \C$ sending $\ol{a}$ to $q$. By \cite{Mih26-1} Proposition 2.4 (2) and \cite{Mih26-1} Proposition 3.1, there exists a non-principal ultrafilter $\cU$ of $\bP$ such that $\ker(\phi)$ coincides with $\set{a' \in \cA}{\lim_{\cU} a' = 0}$. We have
\be
\phi(f(\ol{a})) = f(\phi(\ol{a})) = f(q) = 0,
\ee
and hence $f(\ol{a}) \in \ker(\phi)$. By the choice of $\cU$, we obtain $\lim_{\cU} f(\ol{a}) = 0$, i.e.\ $S \in \cU$. Since $\cU$ is non-principal, this implies $\# S = \infty$.

\vs
Assume (2). By $\# S = \infty$, there exists a non-principal ultrafilter $\cU$ of $\bP$ such that $S \in \cU$, i.e.\ $\lim_{\cU} f(\ol{a}) = 0$. By \cite{Mih26-1} Proposition 3.1, there exists a ring homomorphism $\phi \colon \cA \to \C$ such that $\ker(\phi)$ coincides with $\set{a' \in \cA}{\lim_{\cU} a' = 0}$. In particular, we have $f(\ol{a}) \in \ker(\phi)$, and hence
\be
f(\phi(\ol{a})) = \phi(f(\ol{a})) = 0.
\ee
This implies that $\phi(\ol{a})$ is conjugate to $q$ over $\Q$. Since every field automorphism of the algebraic closure of $\Q$ in $\C$ extends to a field automorphism of the algebraically closed field $\C$, there exists a field automorphism $\sigma \colon \C \to \C$ such that $\sigma(\phi(\ol{a})) = q$. Since the ring homomorphism $\sigma \circ \phi \colon \cA \to \C$ sends $\ol{a}$ to $q$, we have $q \in \NS(a)$.
\end{proof}

As a corollary, we obtain an extension of \cite{MS26} Lemma 2.1 (cf. \cite{AF24} Proposition 3.7):

\begin{crl}
\label{infinitely many limits}
Let $a \in \tl{\cA}$. If $\# \NS(a) = \infty$, then the image of $a$ in $\cA$ is naively transcendental.
\end{crl}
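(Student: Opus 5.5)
We argue by contraposition: assuming that the image $\ol{a} \in \cA$ of $a$ is naively algebraic, we show that $\NS(a)$ is finite. So take $g \in \Q[x] \setminus \ens{0}$ with $g(\ol{a}) = 0$. Clearing denominators, we may assume $g \in \Z[x]$. Since $g \neq 0$, it has only finitely many complex roots; let $R \subset \C$ be this finite set. The goal is the inclusion $\NS(a) \subset R$, which gives $\# \NS(a) \leq \deg g < \infty$ and contradicts the hypothesis $\# \NS(a) = \infty$.

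To prove the inclusion, let $q \in \NS(a)$. By definition there is a ring homomorphism $\phi \colon \cA \to \C$ with $\phi(\ol{a}) = q$. A ring homomorphism commutes with evaluation of polynomials with integer (or rational) coefficients, because $\phi$ sends $1$ to $1$ and therefore fixes the image of $\Q$ in $\cA$. Hence
\be
g(q) = g(\phi(\ol{a})) = \phi(g(\ol{a})) = \phi(0) = 0,
\ee
so $q \in R$.

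The only point that needs care is the convention $0^0 = 1$. It matters if the constant term of $g$ is paired with $\ol{a}^0$ when $\ol{a}$ or $q$ vanishes. With this convention, $x^0$ evaluates to the unit in both $\cA$ and $\C$, so the computation above is still valid. A second possible subtlety is that ring homomorphisms are taken to be unital; this is implicit in the definition of non-standard limits and in its use in Proposition \ref{minimal polynomial vanishing}, so no obstacle is expected.

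Proposition \ref{minimal polynomial vanishing} offers an alternative route. If $q \in \NS(a)$ and $\ol{a}$ satisfies $g$, then infinitely many primes $p$ satisfy both $g(a(p)) \equiv 0$ and $f_q(a(p)) \equiv 0 \pmod{p}$, where $f_q$ is the minimal polynomial of $q$. One could then argue with resultants that $f_q$ divides $g$. However, the direct homomorphism argument above is shorter, and we adopt it.
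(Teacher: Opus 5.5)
Your proof is correct, but it takes a different and more direct route than the paper.

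The paper argues forward. Given a nonzero $f \in \Q[x]$, it picks $q \in \NS(a)$ with $f(q) \neq 0$. It then invokes Proposition \ref{minimal polynomial vanishing} to get infinitely many primes $p$ with $g(a(p)) \equiv 0 \pmod{p}$, where $g$ is the minimal polynomial of $q$. A B\'ezout identity $sf + tg = 1$ then shows $f(a(p)) \not\equiv 0 \pmod{p}$ for almost all such $p$, so $f(\ol{a}) \neq 0$.

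Your contrapositive uses only the definition of a non-standard limit. A unital ring homomorphism $\phi \colon \cA \to \C$ commutes with evaluation of integer polynomials, so any annihilating polynomial of $\ol{a}$ vanishes at every non-standard limit. This bypasses Proposition \ref{minimal polynomial vanishing}, and with it the ultrafilter description of $\ker(\phi)$ from the cited work. It also proves slightly more: if $\ol{a}$ is naively algebraic, then every non-standard limit of $a$, algebraic or not, is a root of one fixed nonzero polynomial. So even a single transcendental non-standard limit already forces naive transcendence.

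What the paper's route buys is an explicit congruence-level witness. It exhibits a concrete infinite set of primes on which $f(a(p)) \not\equiv 0 \pmod{p}$, phrased in the same language as Proposition \ref{minimal polynomial vanishing} and the corollaries that follow. The identity $\phi(f(\ol{a})) = f(\phi(\ol{a}))$ that you rely on already appears in the paper's proof of (1) $\Rightarrow$ (2) of that proposition.

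Your remarks on $0^0 = 1$ and unitality are harmless. A non-unital homomorphism into $\C$ would be identically zero and could add at most the point $0$. The resultant alternative is unnecessary.
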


\begin{proof}
We denote by $\ol{a} \in \cA$ the image of $a$ in $\cA$. We show $f(\ol{a}) \neq 0$ for any $f \in \Q[x] \setminus \ens{0}$. By $f \neq 0$ and $\# \NS(a) = \infty$, there exists some $q \in \NS(a)$ such that $f(q) \neq 0$. Set
\be
S \coloneqq \set{p \in \bP}{g(a(p)) \equiv 0 \pmod{p}},
\ee
where $g \in \Q[x]$ denotes the minimal polynomial of $q$ over $\Q$. By $q \in \NS(a)$ and Proposition \ref{minimal polynomial vanishing}, we have $\# S = \infty$. Therefore, it suffices to show $f(a(p)) \not\equiv 0 \pmod{p}$ for all but finitely many $p \in S$.

\vs
By $f(q) \neq 0$ and the minimality of $g$, we have $\gcd(f,g) = 1$ in $\Q[x]$, i.e.\ there exists some $(s,t) \in \Q[x]^2$ such that $s f + t g = 1$. For any $p \in S$, we have
\be
s(a(p)) f(a(p)) = s(a(p)) f(a(p)) + t(a(p)) g(a(p)) = 1 \not\equiv 0 \pmod{p},
\ee
and hence $f(a(p)) \not\equiv 0 \pmod{p}$ as long as $s \in \Z_{(p)}[x]$.
\end{proof}

\begin{crl}
\label{infinitely many functions}
Let $a \in \tl{\cA}$. If there are infinitely many irreducible $f \in \Q[x]$ such that $\# \set{p \in \bP}{f(a(p)) \equiv 0 \pmod{p}} = \infty$, then the image of $a$ in $\cA$ is naively transcendental.
\end{crl}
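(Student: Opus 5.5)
We reduce Corollary \ref{infinitely many functions} to Corollary \ref{infinitely many limits}: it suffices to produce infinitely many algebraic numbers lying in $\NS(a)$. Let $\mathcal{F}$ be the given infinite set of irreducible polynomials $f \in \Q[x]$ with $\# \set{p \in \bP}{f(a(p)) \equiv 0 \pmod{p}} = \infty$. Since multiplying $f$ by a nonzero rational constant $c$ does not change the set of such $p$ (apart from the finitely many $p$ dividing the numerator or denominator of $c$), we first normalise each $f \in \mathcal{F}$ to be monic. Irreducible polynomials over $\Q$ that are not associates of one another become distinct monic polynomials. Thus, after discarding associates, we obtain infinitely many pairwise distinct monic irreducible polynomials $f$, each still satisfying the infinitude condition. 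Such an $f$ is nonconstant, since a nonzero constant $f$ satisfies $f(a(p)) \not\equiv 0 \pmod{p}$ for all but finitely many $p$.

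For each such monic irreducible $f$, we choose a root $q_f \in \C$. Then $q_f$ is algebraic over $\Q$ and $f$ is its minimal polynomial. Proposition \ref{minimal polynomial vanishing}, applied with (2) $\Rightarrow$ (1), gives $q_f \in \NS(a)$. Distinct monic irreducible polynomials have no common root, because the minimal polynomial of an algebraic number is unique. Hence the map $f \mapsto q_f$ is injective, and $\# \NS(a) = \infty$. Corollary \ref{infinitely many limits} then yields that the image of $a$ in $\cA$ is naively transcendental.

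The only step needing care is the normalisation. We must check that rescaling by a constant $c$ changes the vanishing set $\set{p}{f(a(p)) \equiv 0 \pmod{p}}$ only by finitely many primes, so that its infinitude is preserved. We must also check that the hypothesis ``infinitely many irreducible $f$'' still yields infinitely many non-associate ones. This second point is the main obstacle, if any. A priori, infinitely many polynomials could all be scalar multiples of a single one. Should the hypothesis be read that way, we would have to interpret ``infinitely many'' as ``up to associates'', or equivalently as infinitely many monic ones.
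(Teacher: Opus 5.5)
Your proposal is correct and follows the paper's proof essentially verbatim: reduce to $\# \NS(a) = \infty$ via Corollary \ref{infinitely many limits}, place a root of each monic-normalised $f$ in $\NS(a)$ by Proposition \ref{minimal polynomial vanishing}, and use that distinct monic irreducibles have disjoint root sets. Your caveat about associates is well-founded: for example, $a(p) = 1$ together with $f = c(x-1)$, $c \in \Q^{\times}$, shows that the literal statement fails. The paper's proof tacitly adopts the same reading when it invokes ``the infinite existence of such a monic $f$''.
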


\begin{proof}
By Corollary \ref{infinitely many limits}, it suffices to show $\# \NS(a) = \infty$. For any irreducible $f \in \Q[x]$ such that $\# \set{p \in \bP}{f(a(p)) \equiv 0 \pmod{p}} = \infty$, we have $\set{q \in \C}{f(q) = 0} \subset \NS(a)$ by Proposition \ref{minimal polynomial vanishing}. By the infinite existence of such a monic $f$, we have $\# \NS(a) = \infty$.
\end{proof}

\begin{crl}
\label{infinitely many values}
Let $X \subset \Z$ and $a \in X^{\bP}$. If there exist infinitely many $q \in X$ such that $\# \set{p \in \bP}{a(p) = q} = \infty$, then for any injective map $f \colon X \to \Z$, the image of $f \circ a \in \Z^{\bP} \subset \tl{\cA}$ in $\cA$ is naively transcendental.
\end{crl}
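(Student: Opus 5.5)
The plan is to reduce the statement to Corollary \ref{infinitely many functions}, using linear polynomials. Set $b \coloneqq f \circ a \in \Z^{\bP}$. Since $b$ takes integer values, $b(p) \in \Z \subset \Z_{(p)}$ for every $p \in \bP$, so $b \in \tl{\cA}$ and its image in $\cA$ is defined.

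For each integer $m \in \Z$, consider the linear polynomial $g_m(x) \coloneqq x - m \in \Q[x]$. Every monic polynomial of degree one is irreducible, so each $g_m$ is irreducible, and distinct $m$ give distinct $g_m$. It therefore suffices to find infinitely many $m \in \Z$ such that $g_m(b(p)) \equiv 0 \pmod{p}$ for infinitely many $p \in \bP$.

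Let $Q \subset X$ be the set of those $q \in X$ with $\# \set{p \in \bP}{a(p) = q} = \infty$; by hypothesis $Q$ is infinite. For $q \in Q$, put $m \coloneqq f(q)$. For every $p$ with $a(p) = q$ we have $b(p) = f(q) = m$, so $g_m(b(p)) = 0$ exactly in $\Q$. In particular $g_m(b(p)) \equiv 0 \pmod{p}$: the defining conditions $0 - 0 \in p \Z_{(p)}$ and $0 \in \Z_{(p)}$ both hold. Hence the set of such $p$ is infinite.

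Since $f$ is injective, the map $q \mapsto f(q)$ sends the infinite set $Q$ to an infinite set of integers $m$. This produces infinitely many distinct irreducible polynomials $g_{f(q)}$ satisfying the hypothesis of Corollary \ref{infinitely many functions}, and that corollary gives the naive transcendence of the image of $b = f \circ a$ in $\cA$. No step is a real obstacle; the only point to check is the $\pmod{p}$ convention for exact zero, which holds trivially.
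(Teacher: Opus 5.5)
Your proof is correct and is essentially the paper's argument. The paper applies Corollary \ref{infinitely many limits} directly: each $f(q)$ with $q$ attained infinitely often lies in $\NS(f \circ a)$ (this is Proposition \ref{minimal polynomial vanishing} with minimal polynomial $x - f(q)$), and injectivity of $f$ makes these limits infinitely many; routing through Corollary \ref{infinitely many functions} with your monic, pairwise distinct linear polynomials packages the same steps, and the monic choice is exactly what makes that corollary legitimately applicable, since its hypothesis only works for pairwise non-associate polynomials.
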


\begin{proof}
It suffices to show $\# \NS(f \circ a) = \infty$ by Corollary \ref{infinitely many limits}. Set
\be
X' \coloneqq \set{q \in X}{\# \set{p \in \bP}{a(p) = q} = \infty}.
\ee
We have $\# X' = \infty$ by the assumption. For any $q \in X'$, we have
\be
\# \set{p \in \bP}{f(a(p)) \equiv f(q) \pmod{p}} \geq \# \set{p \in \bP}{f(a(p)) = f(q)} \geq \# \set{p \in \bP}{a(p) = q} = \infty
\ee
by $f(q) \in \Z$. This implies $f(q) \in \NS(f \circ a)$. We obtain
\be
\# \NS(f \circ a) \geq \# f(X') = \# X' = \infty
\ee
by the injectivity of $f$.
\end{proof}

\section{Naive Transcendence of Greatest Odd Divisor of $p-1$}
\label{Naive Transcendence of Greatest Odd Divisor of p-1}

In this section, we study the naive transcendence of the greatest odd divisor of $p-1$ for $p \in \bP$.

\begin{dfn}
For $n \in \N \setminus \ens{0}$, we denote by $v_2(n) \in \N$ the additive $2$-adic valuation of $n$, and by $u_2(n)$ the greatest odd divisor $2^{-v_2(n)}n \in 2 \N + 1$ of $n$.
\end{dfn}

We regard $u_2(x-1)$ and $v_2(x-1)$ as functions $\bP \to \N$. We restate and prove Theorem \ref{intro transcendence of v_2}.

\begin{thm}
\label{transcendence of v_2}
For any injective map $f \colon \N \to \Z$, the image of $f(v_2(x-1))$ in $\cA$ is naively transcendental.
\end{thm}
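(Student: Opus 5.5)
We will deduce the statement from Corollary \ref{infinitely many values}, applied with $X \coloneqq \N$ and $a \coloneqq v_2(x-1) \in \N^{\bP}$, where we put $a(2) \coloneqq 0$ (or any value) since $v_2(2-1)=0$ anyway. The image of $f(v_2(x-1))$ in $\cA$ is then exactly the image of $f \circ a \in \Z^{\bP} \subset \tl{\cA}$, and $f$ is injective by assumption. So it suffices to verify the hypothesis of Corollary \ref{infinitely many values}: there are infinitely many $q \in \N$ such that $\# \set{p \in \bP}{v_2(p-1) = q} = \infty$.

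We will in fact show this for every $q \geq 1$. Fix $q \geq 1$. A prime $p$ satisfies $v_2(p-1) = q$ if and only if $p \equiv 1 + 2^q \pmod{2^{q+1}}$. Since $1 + 2^q$ is odd, it is coprime to $2^{q+1}$. Dirichlet's theorem on primes in arithmetic progressions therefore gives infinitely many primes $p$ in the residue class $1 + 2^q + 2^{q+1}\Z$, and each of them satisfies $v_2(p-1) = q$.

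Hence every $q \in \N \setminus \ens{0}$ belongs to the set $X'$ in the proof of Corollary \ref{infinitely many values}, so $\# X' = \infty$. The corollary then yields the naive transcendence of the image of $f(v_2(x-1))$.

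The only nontrivial input is Dirichlet's theorem, which is standard and may be quoted. If one prefers to avoid the full theorem, the special case of primes $\equiv 1 \pmod{2^q}$ has an elementary proof via cyclotomic polynomials, but it only gives $v_2(p-1) \geq q$. Exact valuations are needed because of the injectivity argument, so I would simply cite Dirichlet. The rest of the argument is formal bookkeeping.
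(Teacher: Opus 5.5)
Your proposal is correct and is essentially the paper's proof: apply Corollary \ref{infinitely many values} with $(X,a) = (\N, v_2(x-1))$, and use Dirichlet's theorem for the progression $1 + 2^q \pmod{2^{q+1}}$. Restricting to $q \geq 1$ is slightly more careful than the paper, which claims infinitely many such primes for every $v \in \N$ even though $v_2(p-1) = 0$ holds only for $p = 2$.
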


\begin{proof}
For any $v \in \N$, we have
\be
\# \set{p \in \bP}{v_2(p-1) = v} = \# \set{p \in \bP}{p \equiv 1 + 2^v \pmod{2^{v+1}}} = \infty
\ee
by Dirichlet's theorem on arithmetic progression. Therefore, the assertion follows from Corollary \ref{infinitely many values} applied to $(X,a) = (\N,v_2(x-1))$.
\end{proof}

We restate and prove Theorem \ref{intro transcendence of u_2}.

\begin{thm}
\label{transcendence of u_2}
The image of $u_2(x-1)$ in $\cA$ is naively transcendental.
\end{thm}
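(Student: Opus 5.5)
The plan is to use Corollary \ref{infinitely many limits} rather than Corollary \ref{infinitely many values}. The latter would need infinitely many odd $k$ with $\# \bP_k = \infty$, which is open (it is stronger than the Proth Prime Conjecture). The way around this is that $u_2(p-1)$ is tied to $v_2(p-1)$ modulo $p$. For $p \in \bP \setminus \ens{2}$ we have $p - 1 = 2^{v_2(p-1)} u_2(p-1)$, so $2^{v_2(p-1)} u_2(p-1) \equiv -1 \pmod{p}$, that is,
\be
u_2(p-1) \equiv -2^{-v_2(p-1)} \pmod{p}.
\ee
Hence $u_2(p-1)$ is congruent modulo $p$ to a rational number that depends only on $v_2(p-1)$, and $v_2(p-1)$ takes each value infinitely often.

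The key steps, in order, are as follows. First, fix $v \in \N \setminus \ens{0}$ and set $q_v \coloneqq -2^{-v} \in \Q$. Its minimal polynomial over $\Q$ is $g_v = x + 2^{-v}$. Second, take any odd $p$ with $v_2(p-1) = v$. Then
\be
g_v(u_2(p-1)) = u_2(p-1) + 2^{-v} = 2^{-v}(2^v u_2(p-1) + 1) = 2^{-v} p,
\ee
which lies in $p \Z_{(p)}$. So $g_v(u_2(p-1)) \equiv 0 \pmod{p}$ for all such $p$. Third, as in the proof of Theorem \ref{transcendence of v_2}, Dirichlet's theorem on arithmetic progressions gives infinitely many such $p$, namely those with $p \equiv 1 + 2^v \pmod{2^{v+1}}$. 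By Proposition \ref{minimal polynomial vanishing}, this yields $q_v \in \NS(u_2(x-1))$.

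Since the $q_v$ are pairwise distinct, we get $\# \NS(u_2(x-1)) = \infty$, and Corollary \ref{infinitely many limits} gives the naive transcendence. Equivalently, one can apply Corollary \ref{infinitely many functions} to the infinitely many distinct irreducible polynomials $g_v$.

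I expect no serious obstacle. The only point needing care is the convention for $\equiv \pmod{p}$ on $\Q$: we must check that $g_v(u_2(p-1))$ lies in $p \Z_{(p)}$, which holds because $2^{-v} \in \Z_{(p)}$ for odd $p$. The prime $p = 2$ is irrelevant, since finitely many primes do not matter.
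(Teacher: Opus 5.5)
Your proof is correct and uses the same mechanism as the paper: the congruence $u_2(p-1) \equiv -2^{-v_2(p-1)} \pmod{p}$ together with Dirichlet's theorem applied to the values of $v_2(p-1)$. The paper applies Theorem \ref{transcendence of v_2} to the image of $2^{v_2(x-1)}$ and then tacitly uses that naive algebraicity survives the passage to $-(2^{v_2(x-1)})^{-1}$, whereas you exhibit the non-standard limits $-2^{-v}$ directly and so avoid that inversion step.
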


\begin{proof}
By Theorem \ref{transcendence of v_2}, the image of $2^{u_2(x-1)}$ in $\cA$ is naively transcendental. For any $p \in \bP$, we have $p = u_2(p-1) 2^{v_2(p-1)} + 1$ and hence
\be
u_2(p-1) \equiv - 2^{- v_2(p-1)} \pmod{p}.
\ee
This implies the assertion.
\end{proof}

\section{Naive Transcendence of Tonelli--Shanks Power}
\label{Naive Transcendence of Tonelli--Shanks Power}

In this section, we study the naive transcendence of Tonelli--Shanks power.

\begin{dfn}
For $n \in \N$, we define a map $\TS_n \colon \bP \to \Z$ by
\be
\TS_n(p) = n^{\frac{u_2(p-1) + 1}{2}},
\ee
and call $\TS_n$ {\it Tonelli--Shanks power of $n$}.
\end{dfn}

We restate and later prove Theorem \ref{intro transcendence of Tonelli--Shanks power}.

\begin{thm}
\label{transcendence of Tonelli--Shanks power}
For any $n \in \Z$, the following are equivalent:
\bi
\item[(1)] The image of $\TS_n$ in $\cA$ is naively transcendental.
\item[(2)] The inequality $\v{n} > 1$ holds.
\ei
\end{thm}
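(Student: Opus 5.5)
The direction (1)$\Rightarrow$(2) is the easy one: I would prove its contrapositive. If $\v{n} \leq 1$, then $n \in \ens{-1,0,1}$. Since $u_2(p-1) \geq 1$, the exponent $\frac{u_2(p-1)+1}{2}$ is a positive integer, so $\TS_0 = 0$ and $\TS_1 = 1$ identically. For $n = -1$, $\TS_{-1}(p) = \pm 1$ for every $p$, so its image in $\cA$ is a root of $x^2 - 1$. In all three cases the image is naively algebraic.

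For (2)$\Rightarrow$(1), I would apply Corollary \ref{infinitely many limits} and show $\# \NS(\TS_n) = \infty$. The basic identity, for $p \nmid n$, $p$ odd, $v = v_2(p-1)$, $k = u_2(p-1)$, is
\be
\TS_n(p)^{2^{v+1}} = n^{(k+1)2^v} = n^{p-1} n^{2^v} \equiv n^{2^v} \pmod{p}.
\ee
So $\TS_n(p)^2 n^{-1}$ is a $2^v$-th root of unity mod $p$. I would split by $v$ using Dirichlet, as in Theorem \ref{transcendence of v_2}. For each $m \in \N$, I would choose infinitely many primes with $v_2(p-1) = m$ for which $\TS_n(p)^2/n$ has order exactly $2^m$ mod $p$. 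Then $\TS_n(p)$ is a root mod $p$ of the polynomial $\Phi_{2^m}(x^2/n)\cdot n^{2^{m-1}}$, whose complex roots are $\pm\sqrt{n\zeta}$ with $\zeta$ a primitive $2^m$-th root of unity. Passing to an irreducible factor $g_m$ with infinitely many such $p$ (pigeonhole over finitely many factors), Proposition \ref{minimal polynomial vanishing} puts the roots of $g_m$ into $\NS(\TS_n)$. Their absolute value squared is $\v{n}$ and their arguments have denominator growing with $m$, so the sets for distinct $m$ are distinct. Alternatively, Corollary \ref{infinitely many functions} applies directly, since the $g_m$ are pairwise distinct irreducibles. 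This yields infinitely many non-standard limits.

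The main obstacle is producing, for each $m$, infinitely many primes $p \equiv 1+2^m \pmod{2^{m+1}}$ with $\TS_n(p)^2/n \equiv n^{k}$ of exact order $2^m$. Equivalently, $n$ must be a non-$2^{m-1}$-th power residue modulo $p$; that is, the image of $n$ in the $2$-Sylow quotient of $\Fp^\times$ must be a generator. I would obtain this from Chebotarev in $\Q(\zeta_{2^{m+1}}, n^{1/2^m})$. I would require the Frobenius to fix $\zeta_{2^m}$ but not $\zeta_{2^{m+1}}$, and to act nontrivially on $n^{1/2^m}$ relative to $n^{1/2^{m-1}}$ in the appropriate Kummer sense.

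Care is needed when $n$ is a perfect power or when $\v{n}$ is a square, e.g.\ $n = 4$ or $n = -4$, because the Kummer degree can collapse; $\sqrt{2} \in \Q(\zeta_8)$ is the typical trap. If exact order $2^m$ fails, I would fall back to a weaker target: order $2^{m'}$ with $m' \to \infty$ along some subsequence of $m$. This suffices, because the Kummer degree of $n^{1/2^m}$ over $\Q(\zeta_{2^{m+1}})$ still tends to infinity whenever $\v{n} > 1$. The reason is that $n$ is not a root of unity, so only bounded $2$-power roots of $\pm n$ lie in cyclotomic fields.
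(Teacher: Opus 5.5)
Your proof is correct in outline and has the same skeleton as the paper's. Both reduce (2)$\Rightarrow$(1) to one task: producing, for infinitely many $w$, infinitely many primes $p$ at which $n^{u_2(p-1)} = \TS_n(p)^2/n$ has exact order $2^w$. Both then feed pairwise distinct irreducible polynomials into Proposition \ref{minimal polynomial vanishing}. The paper applies $\Phi_{2^w}$, which is already irreducible, to $n^{u_2(x-1)}$ and transfers the result via $\TS_n(x)^2 = n \cdot n^{u_2(x-1)}$. Your factorisation of $x^{2^w} + n^{2^{w-1}}$ plus pigeonhole, applied to $\TS_n$ directly, is equivalent.

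The genuine difference is where the primes come from. The paper quotes Hasse's density formulas, which give positive density for each $w > 3$ without any constraint on $v_2(p-1)$. You pin $v_2(p-1) = m$ and use Chebotarev in $L_m = \Q(\zeta_{2^{m+1}}, \alpha)$ with $\alpha^{2^m} = n$. Your route avoids relying on Hasse's explicit tables, but pinning $v_2(p-1)$ makes the exact-order condition much more restrictive.

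Two points need repair.

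\emph{First, the exact-order condition is misstated.} When $v_2(p-1) = m$, the element $n^{u_2(p-1)}$ has order exactly $2^m$ iff $n^{(p-1)/2} \equiv -1 \pmod{p}$, i.e.\ iff $n$ is a quadratic non-residue. Being a non-$2^{m-1}$-th power residue only gives order at least $4$. Since every $p \equiv 1 + 2^m \pmod{2^{m+1}}$ with $m \geq 3$ splits in $\Q(\zeta_8) \ni \sqrt{-1}, \sqrt{2}$, your primary target is impossible for all $m \geq 3$ whenever $n \in \ens{\pm 1, \pm 2} \cdot (\Q^{\times})^2$ (e.g.\ $n = 2, -2, 4, -4, 8, 9$). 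So the fallback carries real weight and must be argued. It does work, as follows.
\begin{itemize}
\item For $\sigma \in \mathrm{Gal}(L_m/\Q(\zeta_{2^m}))$ write $\sigma(\alpha) = \zeta_{2^m}^{e(\sigma)}\alpha$; this defines a homomorphism to $\Z/2^m\Z$.
\item Restriction to $\Q(\zeta_{2^{m+1}})$ is onto. Its kernel is cyclic of order $2^{d_m} = [L_m : \Q(\zeta_{2^{m+1}})]$, with $e$-values exactly $2^{m-d_m}\Z/2^m\Z$.
\item Hence the non-trivial coset contains some $\sigma$ with $v_2(e(\sigma)) \leq m - d_m$.
\item Since $v_2(e)$ is invariant under conjugation, the Chebotarev primes for $\sigma$ satisfy $v_2(p-1) = m$, and $n^{u_2(p-1)}$ has exact order $2^{m - v_2(e(\sigma))} \geq 2^{d_m}$.
\end{itemize}

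\emph{Second, $d_m \to \infty$ needs a real argument}; it does not follow merely from $n$ not being a root of unity. Suppose $d_m = m - t$. Then $n = \beta^{2^t}$ for some $\beta \in \Q(\zeta_{2^{m+1}})$; the root of unity that appears can be absorbed, because $\mu_{2^{m+1}}$ lies in that field. It follows that $\beta\ol{\beta} = \v{n}^{2^{1-t}}$ lies in an abelian field. Write $\v{n} = c^{2^j}$ with $c > 1$ not a square. If $t \geq j+3$, then $c^{1/4}$ would lie in an abelian field, contradicting the non-normality of $\Q(c^{1/4})$. Hence $t \leq j+2$, so $d_m \geq m - j - 2 \to \infty$.

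Incidentally, both density theorems can be bypassed. Take $w \geq 2$ and odd $j$, and let $p$ be a primitive prime divisor of $\v{n}^{2^w j} - 1$, which exists by Zsigmondy. Then $\mathrm{ord}_p(n) = 2^w j$, so $j \mid u_2(p-1)$ and $n^{u_2(p-1)}$ has exact order $2^w$. Distinct $j$ give distinct $p$, so this yields infinitely many such primes for each $w \geq 2$.
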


In order to prove Theorem \ref{transcendence of Tonelli--Shanks power}, we prepare convention and lemmata. For a subset $B \subset \bP$ with Dirichlet density, we denote by $\Delta(B) \in [0,1]$ Dirichlet density of $B$. Let $n \in \Z$ and $v \in \N$. We set
\be
B_n & \coloneqq & \set{p \in \bP}{n^{u_2(p-1)} \not\equiv 1 \pmod{p}} \\
B_{n,v} & \coloneqq & \set{p \in B_n}{v_2(p - 1) = v}.
\ee
If $(n,v) = (0,0)$, then we have $B_{n,v} = \ens{2}$. If $n = 0$ and $v > 0$, then we have $B_{n,v} = \set{p \in \bP}{p \equiv 1 + 2^v \pmod{2^{v+1}}}$, and hence $\# B_{n,v} = \infty$ by Dirichlet's theorem on arithmetic progression. If $n \neq 0$, then $B_{n,v}$ is identical to $B_v$ for $a = n$ with respect to the convention in \cite{Has66} by the definition of $u_2$, and hence has Dirichlet density.

\begin{lmm}
\label{Dirichlet density}
For any $(n,v) \in (\Z \setminus \ens{-1,0,1}) \times \N_{> 2}$, $\Delta(B_{n^{2^{v+1}}}) < \Delta(B_{n^{2^v}})$ holds.
\end{lmm}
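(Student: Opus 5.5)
The plan is to show that $B_{m^2} \subset B_m$ for $m \coloneqq n^{2^v}$, and then that the difference $B_m \setminus B_{m^2}$ contains a set of primes with positive Dirichlet density. Note first that $n^{2^{v+1}} = m^2$. For a prime $p$ with $p \notin B_m$ we have $m^{u_2(p-1)} \equiv 1 \pmod{p}$, hence $(m^2)^{u_2(p-1)} \equiv 1 \pmod{p}$, so $p \notin B_{m^2}$. This gives $B_{m^2} \subset B_m$. Both sets have Dirichlet density by \cite{Has66}, since $m \neq 0$. It therefore suffices to show that $D \coloneqq B_m \setminus B_{m^2}$ contains a subset of positive Dirichlet density. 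For odd $p \nmid n$, the set $D$ is exactly the set of $p$ with $m^{u_2(p-1)} \equiv -1 \pmod{p}$. In other words, the $2$-part of the multiplicative order of $n$ modulo $p$ is exactly $2^{v+1}$.

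Next I translate this into a splitting condition. Fix $w > v$ and restrict to primes $p \nmid 2n$ with $v_2(p-1) = w$. The $2$-Sylow subgroup of $\Fp^\times$ is cyclic of order $2^w$, and the $2$-part of the order of $n$ is $2^{v+1}$ exactly when $n$ is a $2^{w-v-1}$-th power modulo $p$ but not a $2^{w-v}$-th power. I then set $K_j \coloneqq \Q(\zeta_{2^{w+1}}, n^{1/2^j})$ and $L_j \coloneqq \Q(\zeta_{2^w}, n^{1/2^j})$. The primes in question are those $p$ such that
\bi
\item[(a)] $p$ splits completely in $L_{w-v-1}$;
\item[(b)] $p$ does not split completely in $\Q(\zeta_{2^{w+1}})$;
\item[(c)] $p$ does not split completely in $L_{w-v}$.
\ei
By Chebotarev's density theorem, applied in the Galois extension $K_{w-v}/\Q$, this set has positive density as soon as some element $\sigma \in \r{Gal}(K_{w-v}/L_{w-v-1})$ acts nontrivially both on $\zeta_{2^{w+1}}$ and on $n^{1/2^{w-v}}$. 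Since $\r{Gal}(K_{w-v}/L_{w-v-1})$ is a subgroup of $(\Z/2\Z)^2$, generated by these two square-root adjunctions, such a $\sigma$ exists precisely when the two quadratic subextensions are not forced to coincide.

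The main obstacle is this entanglement between Kummer and cyclotomic extensions at the prime $2$. The naive choice $w = v+1$ asks for $p \equiv 1 + 2^{v+1} \pmod{2^{v+2}}$ together with $n$ a quadratic non-residue. Since $v > 2$, this forces $p \equiv 1 \pmod 8$, and the choice fails when $\sqrt{n} \in \Q(\zeta_8)$, for instance $n = 2$, $n = -2$, or $n = -k^2$. To handle this, I write $n = \pm n_0^{2^h}$ with $n_0 > 1$ not a square; this is possible because $\v{n} > 1$. I then take $w$ large compared with $h$ and $v$, say $w - v - 1 \geq h + 2$.

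With this choice I will use the standard description of $2$-power Kummer theory over $\Q(\zeta_{2^w})$, due to Schinzel and also implicit in \cite{Has66}. The point is that $\sqrt{\pm 2}$ and $\sqrt{-1}$ already lie in $\Q(\zeta_8) \subset \Q(\zeta_{2^w})$. Hence, once $w - v - 1$ is large, adjoining $n^{1/2^{w-v}}$ to $L_{w-v-1}$ gives a genuine quadratic extension, and this extension differs from $L_{w-v-1}(\zeta_{2^{w+1}})$. For this I need to show that $\zeta_{2^{w+1}}$, up to $L_{w-v-1}^{\times 2}$, is not a square class coming from $n_0$; I would do this by comparing ramification at odd primes dividing $n_0$, or by a degree count when $n_0$ is a power of $2$.

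If this becomes messy, my fallback is to quote Hasse's explicit formula for $\Delta(B_a)$ from \cite{Has66}. That formula depends only on the largest $h$ with $a$ a $2^h$-th power, on whether $a > 0$, and on the square class of the non-square part. Comparing $a = n^{2^v}$ with $a = n^{2^{v+1}}$, for which $h$ increases by one, the strict inequality becomes a direct computation with geometric sums. The hypothesis $v > 2$ guarantees that both exponents lie in the range where the special cases of $\pm 2$-entanglement have already stabilised.
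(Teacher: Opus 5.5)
Your reduction is sound: $B_{m^2}\subseteq B_m$, the difference is (up to finitely many primes) the set of $p$ at which the $2$-part of the order of $n$ is exactly $2^{v+1}$, and for fixed $w>v$ the part with $v_2(p-1)=w$ is the Chebotarev set cut out by (a)--(c) in $K_{w-v}/\Q$.

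\textbf{The gap.} The criterion you then apply is wrong. Write $L=L_{w-v-1}$, $\alpha=\zeta_{2^{w+1}}$, $\beta=n^{1/2^{w-v}}$. An element of $\mathrm{Gal}(K_{w-v}/L)$ moving both $\alpha$ and $\beta$ exists if and only if $\alpha\notin L$ and $\beta\notin L$. Coincidence of $L(\alpha)$ and $L(\beta)$ does no harm: if they are the same quadratic extension of $L$, its nontrivial automorphism moves both. Your own failing example, $w=v+1$ with $\sqrt{n}\in\Q(\zeta_8)$, is a case of $\beta\in L$, not of coincidence.

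So the non-coincidence you plan to prove via ramification or a degree count is not needed. What is needed is $\zeta_{2^{w+1}}\notin L_{w-v-1}$, and you never check it. It does not follow from ``$L(\beta)/L$ is quadratic and $L(\beta)\neq L(\alpha)$'', since both hold trivially if $\alpha\in L$.

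Also, $\sqrt{\pm 2},\sqrt{-1}\in\Q(\zeta_8)$ does not by itself explain why $\beta\notin L$. The actual reason is this. Put $F=\Q(\zeta_{2^w})$ and let $t$ be the largest integer with $n\in F^{\times 2^t}$. Then $t$ is bounded independently of $w$ (by $h+1$, since $|n|>1$ and $n_0$ is not a square). Kummer theory over $F$ then gives $\beta\notin L$ once $w-v-1>t$.

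\textbf{The missing check is short.} Suppose $\zeta_{2^{w+1}}\in L$.
\begin{itemize}
\item Then $F(\zeta_{2^{w+1}})$ is the unique quadratic subextension $F(\sqrt{y})$ of the cyclic extension $L/F$, where $y\in F$ and $y^{2^t}=n$.
\item Hence $y\zeta_{2^w}=z^2$ for some $z\in F$, so $z^{2^{t+1}}=n\zeta_{2^w}^{2^t}$.
\item Therefore $z=c\xi$ with $c=|n|^{1/2^{t+1}}>0$ and $\xi$ a root of unity of exact order $2^{w+1}$ (as $w-t\geq 2$).
\item Then $c=z\xi^{-1}$ is real and lies in $\Q(\zeta_{2^{w+1}})$. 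Since $\Q(c)$ is Galois over $\Q$ and the only real roots of $X^{2^{t+1}}-|n|$ are $\pm c$, we get $c^2\in\Q$.
\item So $c\in\Q^\times\cup\sqrt{2}\,\Q^\times\subseteq F$, hence $\xi\in F$, a contradiction.
\end{itemize}
With this, your Chebotarev argument is complete. It works for every $v\in\N$, not just $v>2$.

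\textbf{Comparison with the paper.} The paper uses neither Chebotarev nor Kummer theory. Its proof is your fallback: it cites Hasse's explicit density formula and reads off, for $v>2$, how the density changes from $n^{2^v}$ to $n^{2^{v+1}}$.

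If you go that way, watch the direction. The paper's printed relation $\Delta(B_{n^{2^{v+1}}})=2^{-1}(1+\Delta(B_{n^{2^v}}))$, together with $\Delta(B_{n^{2^v}})<1$, would give the reverse inequality. It would also contradict your inclusion $B_{m^2}\subseteq B_m$. That relation holds for the complementary odd-order density, i.e.\ it amounts to $\Delta(B_{n^{2^{v+1}}})=\frac{1}{2}\Delta(B_{n^{2^v}})$. So what must be extracted from Hasse is $\Delta(B_{n^{2^v}})>0$.

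The citation route is shorter but rests entirely on Hasse's tables and on the stable range $v>2$. Your repaired route is self-contained and exhibits the positive-density set $B_{n^{2^v}}\setminus B_{n^{2^{v+1}}}$ directly.
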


\begin{proof}
The explicit formula of $\Delta(B_v)$ and the table of $\kappa_v$ for $v \in \N$ in \cite{Has66} imply $\Delta(B_{n^{2^v}}) < 1$ and $\Delta(B_{n^{2^{v+1}}}) = 2^{-1}(1 + \Delta(B_{n^{2^v}}))$ for $v \in \N$. These imply the assertion.
\end{proof}

\begin{lmm}
\label{root of unity}
For any $(n,v) \in (\Z \setminus \ens{-1,0,1}) \times \N_{> 3}$, there exist infinitely many $p \in \bP$ such that $\Phi_{2^v}(n^{u_2(p-1)}) \equiv 0 \pmod{p}$, where $\Phi_{2^v}$ denotes the $2^v$-th cyclotomic polynomial.
\end{lmm}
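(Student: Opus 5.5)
The plan is to reduce the statement to a positive-density statement about the $2$-adic valuation of the multiplicative order of $n$ modulo $p$, and then read that positive density off Lemma \ref{Dirichlet density}. Throughout, fix $(n,v)$ as in the statement and consider only primes $p$ with $p \nmid 2n$; this discards finitely many primes. For such $p$, write $\mathrm{ord}_p(m)$ for the multiplicative order of $m$ modulo $p$ when $p \nmid m$.

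\textbf{Step 1 (translate the condition).} Since $u_2(p-1)$ is the odd part of $p-1$, the element $n^{u_2(p-1)}$ generates the $2$-primary part of the cyclic group $\langle n \bmod p \rangle$. Hence its order is exactly $2^{v_2(\mathrm{ord}_p(n))}$. Because $p$ is odd, the polynomial $x^{2^v}-1$ is separable over $\Fp$ and factors as $\prod_{j \leq v} \Phi_{2^j}$. So an element of $\Fp^\times$ is a root of $\Phi_{2^v}$ precisely when its order is exactly $2^v$. The lemma therefore reduces to showing that
\be
C_v \coloneqq \set{p \in \bP}{p \nmid 2n, \ v_2(\mathrm{ord}_p(n)) = v}
\ee
is infinite.

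\textbf{Step 2 (express $C_v$ through the sets $B_m$).} For $j \in \N$ and $p \nmid 2n$, we have $(n^{2^j})^{u_2(p-1)} \not\equiv 1 \pmod{p}$ iff $\mathrm{ord}_p(n^{2^j})$ is even. Since $\mathrm{ord}_p(n^{2^j}) = \mathrm{ord}_p(n)/\gcd(\mathrm{ord}_p(n),2^j)$, this holds iff $v_2(\mathrm{ord}_p(n)) > j$. Consequently, up to the finitely many $p \mid 2n$:
\bi
\item[(i)] $B_{n^{2^v}} \subset B_{n^{2^{v-1}}}$;
\item[(ii)] $B_{n^{2^{v-1}}} \setminus B_{n^{2^v}} = C_v$.
\ei

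\textbf{Step 3 (positive density).} Since $v > 3$, we have $v-1 \in \N_{>2}$. Lemma \ref{Dirichlet density} applied to $(n,v-1)$ gives $\Delta(B_{n^{2^v}}) < \Delta(B_{n^{2^{v-1}}})$. Both sets have Dirichlet density by \cite{Has66}, as noted before Lemma \ref{Dirichlet density}, and (i) holds up to a finite set. By finite additivity of Dirichlet density on a disjoint union of sets that have density, $C_v$ has Dirichlet density equal to the difference of the two densities. This difference is positive, so $C_v$ is infinite. Combined with Step 1, this gives the claim.

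The main obstacle is the bookkeeping in Step 2. One has to get the direction of the inclusion right, and make sure the index shift (using $v-1$ rather than $v$ in Lemma \ref{Dirichlet density}) is exactly what the hypothesis $v > 3$ allows. One should also check that the finitely many excluded primes, those dividing $2n$, do not affect any density. Everything else is standard finite cyclic group theory.
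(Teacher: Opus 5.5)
Your proposal is correct and follows the paper's proof: the paper also applies Lemma \ref{Dirichlet density} to $(n,v-1)$ and identifies $B_{n^{2^{v-1}}} \setminus B_{n^{2^v}}$ with the set of primes where $\Phi_{2^v}(n^{u_2(p-1)}) \equiv 0 \pmod{p}$. Your order-theoretic bookkeeping (the inclusion $B_{n^{2^v}} \subset B_{n^{2^{v-1}}}$, additivity of Dirichlet density, and discarding $p \mid 2n$) spells out what the paper leaves implicit, and the exclusion of $p = 2$ is genuinely needed, because for odd $n$ the paper's equivalence, stated ``for any $p \in \bP$'', fails at $p = 2$.
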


\begin{proof}
The assertion immediately follows from Lemma \ref{Dirichlet density} applied to $(n,v-1)$, because for any $p \in \bP$, the system
\be
\left\{
\begin{array}{rcl}
n^{u_2(p-1) 2^{v-1}} & \not\equiv & 1 \pmod{p} \\
n^{u_2(p-1) 2^v} & \equiv & 1 \pmod{p}
\end{array}
\right.
\ee
of congruence relations is equivalent to both of $p \in B_{n^{2^{v-1}}} \setminus B_{n^{2^v}}$ and $\Phi_{2^v}(n^{u_2(p-1)}) \equiv 0 \pmod{p}$.
\end{proof}

\begin{lmm}
\label{u_2-th power}
For any $n \in \Z$, the following are equivalent:
\bi
\item[(1)] The image of $n^{u_2(x-1)}$ in $\cA$ is naively transcendental.
\item[(2)] The inequality $\v{n} > 1$ holds.
\ei
\end{lmm}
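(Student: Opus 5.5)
We prove the two implications separately. The direction (1) $\Rightarrow$ (2) is elementary. The direction (2) $\Rightarrow$ (1) reduces to Corollary \ref{infinitely many functions} fed by Lemma \ref{root of unity}.

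For (1) $\Rightarrow$ (2) we argue by contraposition, assuming $\v{n} \leq 1$. If $n = 1$, the sequence $n^{u_2(x-1)}$ is constantly $1$. If $n = -1$, it is constantly $-1$, since $u_2(p-1)$ is odd. If $n = 0$, it is constantly $0$, because $u_2(p-1) \geq 1$; in any case $0^0 \coloneqq 1$ would not matter. Hence in each case the image in $\cA$ is the image of a rational constant $c$, which is a root of $x - c \in \Q[x] \setminus \ens{0}$ and so is naively algebraic.

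For (2) $\Rightarrow$ (1), assume $\v{n} > 1$ and set $a \coloneqq n^{u_2(x-1)} \in \Z^{\bP} \subset \tl{\cA}$. By Lemma \ref{root of unity}, for every $v \in \N_{> 3}$ the set of $p \in \bP$ with $\Phi_{2^v}(a(p)) \equiv 0 \pmod{p}$ is infinite. The cyclotomic polynomials $\Phi_{2^v}$ are irreducible over $\Q$ and pairwise distinct, since their degrees $2^{v-1}$ differ. Thus infinitely many irreducible $f \in \Q[x]$ satisfy $\# \set{p \in \bP}{f(a(p)) \equiv 0 \pmod{p}} = \infty$, and Corollary \ref{infinitely many functions} shows that the image of $a$ in $\cA$ is naively transcendental. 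Equivalently, $\NS(a)$ contains all primitive $2^v$-th roots of unity for $v > 3$ and is therefore infinite, so Corollary \ref{infinitely many limits} also applies.

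All the number-theoretic content sits in Lemma \ref{root of unity}, which in turn rests on the density comparison of Lemma \ref{Dirichlet density} from \cite{Has66}. Given that lemma, the only points to check are the sign bookkeeping for $n = -1$ (oddness of $u_2(p-1)$) and the convention for $n = 0$. If I could not rely on Lemma \ref{root of unity}, the main obstacle would be showing that the $2$-power order of $n$ modulo $p$ is unbounded along infinitely many primes. I would first try a Chebotarev argument in $\Q(\zeta_{2^{v+1}}, n^{1/2^{v}})$.
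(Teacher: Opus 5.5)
Your proof is correct and follows the paper's own argument. For $\v{n} \leq 1$ you show the image is algebraic by an explicit relation: the paper uses the single polynomial $x^2 - n^2$, while you note the sequence is the constant $1$, $-1$ or $0$. For $\v{n} > 1$ you apply Corollary \ref{infinitely many functions} to the irreducible cyclotomic polynomials $\Phi_{2^v}$, $v > 3$, supplied by Lemma \ref{root of unity}, exactly as the paper does.
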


\begin{proof}
We have $(n^{u_2(x-1)})^2 = n^2$ if $\v{n} \leq 1$. Therefore, we may assume $\v{n} > 1$. By Corollary \ref{infinitely many functions} and Lemma \ref{root of unity}, the image of $n^{u_2(x-1)}$ in $\cA$ is naively transcendental.
\end{proof}

\begin{proof}[Proof of Theorem \ref{transcendence of Tonelli--Shanks power}]
The assertion immediately follows from Lemma \ref{u_2-th power} and $\TS_n(x)^2 = n \times n^{u_2(x-1)}$.
\end{proof}

For $n \in \Z$, we say that $r \in \tl{\cA}$ is a {\it residual square root of $n$} if there exists some $q \in \Q^{\times}$ such that for any $p \in \bP$, the relation
\be
\left( \left( \frac{r}{p} \right) = -1 \Rightarrow r(p) \equiv q \pmod{p} \right) \land \left( \left( \frac{r}{p} \right) \neq -1 \Rightarrow r(p)^2 \equiv n \pmod{p} \right)
\ee
holds, where $(\frac{r}{p})$ denotes Legendre symbol. The correction term of Tonelli--Shanks algorithm forms a naively transcendental element of $\cA$ in the following sense:

\begin{crl}
For any $f \in \tl{\cA}$, if $\TS_n f$ is a residual square root $r$ of $n$ for some $n \in \Z$ with $\v{n} > 1$, then the image of $f$ in $\cA$ is naively transcendental.
\end{crl}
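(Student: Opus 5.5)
Argue by contradiction. Suppose the image $\ol{f}$ of $f$ in $\cA$ is naively algebraic, so $g(\ol{f}) = 0$ for some $g \in \Q[x] \setminus \ens{0}$. Then $g(f(p)) \equiv 0 \pmod{p}$ for all but finitely many $p \in \bP$. The aim is to turn this into a nonzero polynomial relation satisfied by the image of $\TS_n$, which would contradict Theorem \ref{transcendence of Tonelli--Shanks power} because $\v{n} > 1$.

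First I will use the residual square root condition to show that $r = \TS_n f$ takes only finitely many values modulo $p$, up to algebraic data. Let $q \in \Q^{\times}$ be the rational number in the definition. For each $p$, either $r(p) \equiv q \pmod{p}$, or $r(p)^2 \equiv n \pmod{p}$. In both cases $h(r(p)) \equiv 0 \pmod{p}$ for the fixed nonzero polynomial
\be
h(x) \coloneqq (x - q)(x^2 - n) \in \Q[x].
\ee
Thus the image $\ol{r}$ of $r$ satisfies $h(\ol{r}) = 0$ in $\cA$, which is the key algebraic input.

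Next I will eliminate $f$. We have $\ol{r} = \ol{\TS_n}\,\ol{f}$. Also $\ol{\TS_n}$ is a unit in $\cA$, since $n \neq 0$ and $\TS_n(p)$ is a power of $n$, which is nonzero modulo every prime $p \nmid n$. Hence $\ol{f} = \ol{r}\,\ol{\TS_n}^{-1}$. Substituting into $g$ and multiplying by $\ol{\TS_n}^{\deg g}$ gives $G(\ol{r}, \ol{\TS_n}) = 0$, where $G(y,t) \coloneqq t^{\deg g} g(y/t) \in \Q[y,t]$ is the homogenisation of $g$. Since $\ol{r}$ is a root of $h$, I will take the resultant in $y$:
\be
R(t) \coloneqq \mathrm{Res}_y(G(y,t), h(y)) \in \Q[t].
\ee
Then $R(\ol{\TS_n}) = 0$, by the standard identity $R = AG + Bh$ with $A, B \in \Q[y,t]$, evaluated in the commutative ring $\cA$.

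The main obstacle is showing that $R \neq 0$. The roots of $h$ are $q$ and $\pm\sqrt{n}$, all nonzero, and $G(y,t) = \prod_i (y - \alpha_i t)$ up to a constant, where the $\alpha_i$ are the roots of $g$. So $R(t)$ is, up to a nonzero constant, $\prod_{\beta} \prod_i (\beta - \alpha_i t)$ over the roots $\beta$ of $h$. This is a product of nonzero linear polynomials in $t$, because each $\beta \neq 0$. If $g$ has the root $0$, so that some $\alpha_i = 0$, the corresponding factor is the nonzero constant $\beta$. Hence $R \neq 0$. Therefore the image of $\TS_n$ is naively algebraic, contradicting Theorem \ref{transcendence of Tonelli--Shanks power}.

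Alternatively, one can avoid resultants by working with non-standard limits. Every ring homomorphism $\phi \colon \cA \to \C$ sends $\ol{r}$ into the finite set $\ens{q, \pm\sqrt{n}}$ and, under the algebraicity assumption, sends $\ol{f}$ into the finite set of roots of $g$, none of which $\phi(\ol{f})$ can be $0$ since $\phi(\ol{r}) \neq 0$. Hence $\phi(\ol{\TS_n}) = \phi(\ol{r})/\phi(\ol{f})$ lies in a finite set of algebraic numbers. Combined with the resultant argument above, this gives naive algebraicity of $\ol{\TS_n}$ and the same contradiction.
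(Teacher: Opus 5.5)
Your proof is correct and follows the paper's strategy. You annihilate $\ol{r}$ by $(x-q)(x^2-n)$, assume $\ol{f}$ is naively algebraic, eliminate it to make the image of $\TS_n$ naively algebraic, and contradict Theorem \ref{transcendence of Tonelli--Shanks power}. The only difference is the elimination step: the paper inverts $\ol{\TS_n}\,\ol{f}$ using $nq \neq 0$ and tacitly uses that products of naively algebraic elements of $\cA$ stay naively algebraic (true because these are exactly the elements integral over $\Q$), whereas your resultant, nonzero because $q$ and $\pm\sqrt{n}$ are nonzero, writes down an annihilating polynomial for $\ol{\TS_n}$ explicitly, so your closing ring-homomorphism remark is not needed.
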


\begin{proof}
We denote by $a \in \cA$ the image of $\TS_n$, and by $\ol{f} \in \cA$ the image of $f$. Since $\TS_n f$ is a residual square root $r$ of $n$, we have $((a \ol{f})^2 - n)(a \ol{f} - q) = 0$ for some $q \in \Q^{\times}$. Therefore, $a \ol{f}$ is naively algebraic. By $nq \in \Q^{\times}$, $a \ol{f}$ is invertible, and we have $((a \ol{f})^{-2} - n^{-1})((a \ol{f})^{-1} - q^{-1}) = 0$. Therefore, $(a \ol{f})^{-1}$ is naively algebraic. If $\ol{f}$ is naively algebraic, then so is $\ol{f}(a \ol{f})^{-1} = a^{-1}$. This contradicts the naive transcendence of $a$, which follows from $\v{n} > 1$ and Theorem \ref{transcendence of Tonelli--Shanks power}. Therefore, $\ol{f}$ is naively transcendental.
\end{proof}

\vspace{0.3in}
\addcontentsline{toc}{section}{Acknowledgements}
\noindent {\Large \bf Acknowledgements}
\vspace{0.2in}

\noindent
I thank all people who helped me to learn mathematics and programming. I also thank my family.

\end{document}